\documentclass[10pt,reqno]{amsart}
\usepackage{amsopn,enumerate,url}

% THEOREMS -------------------------------------------------------
\theoremstyle{plain}
\newtheorem{theorem}{Theorem}[section]
\newtheorem{lemma}[theorem]{Lemma}
\newtheorem{corollary}[theorem]{Corollary}
\newtheorem{proposition}[theorem]{Proposition}

\theoremstyle{remark}
\newtheorem*{remark*}{Remark}

% MATH -----------------------------------------------------------
\newcommand{\FF}{\mathbb F}
\newcommand{\ZZ}{\mathbb Z}
\newcommand{\PP}{\mathbb P}

\DeclareMathOperator{\Jac}{Jac}
\DeclareMathOperator{\Tr}{Tr}
\DeclareMathOperator{\Gal}{Gal}
\DeclareMathOperator{\Aut}{Aut}
\DeclareMathOperator{\Diff}{Diff}
\DeclareMathOperator{\PGU}{PGU}
\DeclareMathOperator{\Sz}{Sz}
\DeclareMathOperator{\Ree}{Ree}

\newcommand{\rcf}{{\text{rcf}}}

\newcommand{\mc}[1]{\mathcal{#1}}
\newcommand{\mf}[1]{\mathfrak{#1}}

\subjclass[2010]{Primary 11G20; Secondary 14H25.}

\begin{document}

\title[New maximal curves as ray class fields over Deligne-Lusztig curves]{New maximal curves as ray class fields over Deligne-Lusztig curves}

\author{Dane C. Skabelund}
\address{
Department of Mathematics\\
University of Illinois\\
Urbana, IL 61801\\
U.S.A.}
\email{skabelu2@illinois.edu}

\begin{abstract}
  We construct new covers of the Suzuki and Ree curves which are maximal with respect to the Hasse-Weil bound over suitable finite fields.
  These covers are analogues of the Giulietti-Korchm\'aros curve, which covers the Hermitian curve and is maximal over a base field extension.
  We show that the maximality of these curves implies that of certain ray class field extensions of each of the Deligne-Lusztig curves.
  Moreover, we show that the Giulietti-Korchm\'aros curve is equal to the above-mentioned ray class field extension of the Hermitian curve.
\end{abstract}

\maketitle

\section{Introduction}

A smooth, geometrically irreducible, projective algebraic curve (henceforth \linebreak \emph{curve}) $X$ defined over a finite field $\FF_q$ is called maximal (over $\FF_q$) if it meets the Hasse-Weil upper bound for the number of $\FF_q$-rational points for its genus, that is, if
\[
  \#X(\FF_q) =  q + 1 + 2g q^{1/2} .
\]
This is equivalent to saying that the $L$-polynomial of $X$ over $\FF_q$ is of the form $L_X(T) = (1+q^{1/2} T)^{2g}$.
Interest in studying maximal curves was sparked in the 1980s by their application in the theory of algebraic geometry codes.
An interesting observation, often attributed to Serre (see \cite{Lachaud}, Prop.\ 6), is that if $X \to Y$ is a nonconstant map of curves defined over $\FF_q$, then there is divisibility of $L$-polynomials in the opposite direction.
This implies that any curve covered by a $\FF_q$-maximal curve is also $\FF_q$-maximal.
This observation motivates the question of determining which $\FF_q$-maximal curves are largest with respect to the partial order on the set of $\FF_q$-maximal curves induced by coverings of one curve by another.

It was shown by Ihara \cite{Ihara} that an $\FF_q$-maximal curve has genus at most $q_0(q_0-1)/2$, where $q = q_0^2$.
Since this bound is attained by the Hermitian curve $H = H_{q_0}$ defined by
\[
  y^{q_0} + y = x^{q_0+1},
\]
the curve $H$ is not covered by any other $\FF_q$-maximal curve.

Other examples of maximal curves include the Suzuki and Ree curves.
These curves are defined over finite fields $\FF_q$ of odd prime power order in characteristic 2 and 3, respectively.
Each has an optimal number of $\FF_q$-rational points for its genus, and becomes maximal over a suitable extension of the base field.
Together with the Hermitian curve they constitute the Deligne-Lusztig curves associated to the simple groups of type ${}^2A_2$, ${}^2B_2$, and ${}^2G_2$ \cite{DL} \cite{Hansen}.
It has been shown that the smallest examples of the Suzuki and Ree curves are not Galois covered by the Hermitian curve maximal over the same field \cite{MZ}.
Each of the Deligne-Lusztig curves can be described as a composite of Artin-Schreier extensions over $\PP^1$ with very similar ramification structure, and Lauter has shown \cite{Lauter} that their function fields admit a unified description as ray class fields over $\FF_q(x)$ with conductor supported at a single rational point.

In 2008, Giulietti and Korchm\'aros \cite{GK} discovered a new family of curves maximal over $\FF_{q^3}$ for $q = q_0^2$ a square, each of which is not covered by the Hermitian curve $H_{q_0^3}$.
We denote this ``GK-curve'' by $\widetilde H = \widetilde H(q)$.
The curve $\widetilde H$ has genus $\frac 12 q(q+q_0-1)(q_0-1)$ and its function field may be defined by the equations
\[
  y^{q_0}+y = x^{q_0+1},
  \qquad t^m = x^q - x,
\]
where $m = q - q_0 + 1$.
These equations describe the curve $\widetilde H$ as the fiber product of the cover $H \to \PP^1_x$ with a Kummer extension of $\PP^1_x$ of degree $m$.

The results in this paper are motivated by the observation that the abelian cover $\widetilde H \to H$ is tamely ramified at each $\FF_q$-rational point of $H$, and that every other $\FF_{q^3}$-rational point of $H$ splits completely in $\widetilde H$.
Thus, the function field $\FF_{q^3}(\widetilde H)$ is contained in the maximal abelian extension of $\FF_{q^3}(H)$ of conductor $\mf m = \sum_{P \in H(\FF_q)} P$ in which each point of $\Sigma = H(\FF_{q^3}) \setminus H(\FF_q)$ splits.
The corresponding curve $H_\rcf$ is also maximal over $\FF_{q^3}$ and is in fact equal to $\widetilde H$, as we  show in section \ref{sec RCF}.
We note that the splitting set $\Sigma$ consists of the second short orbit under the action of $\Aut(H) = \PGU(3,q_0)$, the first being the set of $\FF_q$-rational points.
This provides an intrinsic alternate description of the GK-curve as arising from the Hermitian curve in a natural way.
Moreover, it motivates our analogous contruction of maximal covers of the Suzuki and Ree curves.

We outline the remaining contents of the paper.
In section \ref{sec DL curves} we give background information about the Deligne-Lusztig curves.
In sections \ref{sec wtS} and \ref{sec wtR} we introduce cyclic covers $\widetilde S$ and $\widetilde R$ of the Suzuki and Ree curves analogous to the GK-curve, and show that these are maximal over a suitable base field.
In section \ref{sec RCF}, we consider for each Deligne-Lusztig curve $X$ a cover $X_\rcf \to \widetilde X$ obtained from $X$ via a ray class field construction, and show that the maximality of $\widetilde X$ implies that of $X_\rcf$.
We then show that the cover $X_\rcf \to X$ is cyclic of a prescribed form.
This allows us to show that $H_\rcf = \widetilde H$, and to verify computationally that $X_\rcf = \widetilde X$ for small values of $q$ when $X$ is one of the Suzuki or Ree curves.
We leave as an open problem whether $X_\rcf = \widetilde X$ in general, but give a bound on the degree of the cover $X_\rcf \to \widetilde X$.

The curves $X_\rcf$ can be thought of as arising via a two-step ray class field contruction over $\PP^1$.
First, apply the construction of Lauter allowing one $\FF_q$-rational point to ramify and splitting all other rational points.
Then extend the base field before taking another ray class field in which the $\FF_q$-rational points are allowed to ramify tamely, while all other points rational over the new base field are caused to split.

\section{The Deligne-Lusztig Curves}\label{sec DL curves}

In this section we collect some facts about the Deligne-Lusztig curves which will be used later.
In particular, we note that the Hermitian, Suzuki, and Ree curves have exactly two short orbits under the action of their full automorphism group, each consisting of all points of certain degrees.
These orbits form the sets of points which ramify and split in the covers constructed in later sections.
The results in this section should be well known; we include proofs of some statements for the sake of completeness.

A point of a curve $X$ is fixed by some automorphism of $X$ exactly if it is ramified in the quotient by the full automorphism group.
Thus, the Riemann-Hurwitz formula may be applied to the cover $X \to X/\Aut(X)$ to study these points.
For $G$ a finite subgroup of automorphisms of $X$, this formula may be written in the form
\[
 2g(X) - 2 = \#H\left( 2g(X/G)-2 + \sum_{\mf p \in X/G} \frac{d(\mf p)}{e(\mf p)} \deg \mf p \right) ,
\]
where $d(\mf p)$ and $e(\mf p)$ are the different exponent and ramification index of $\mf p$ in $X \to X/G$.

\subsection{The Hermitian Curve}

Let $q = q_0^2$ be a prime power. The Hermitian curve $H = H_{q_0}$ has an affine plane model defined by
\[
  y^{q_0}+y = x^{q_0+1} .
\]
It has genus $q_0(q_0-1)/2$ and is maximal over $\FF_q$, with $\#X(\FF_q)= q^{3/2}+1$.
Its automorphism group $\PGU(3,q_0)$ is of size $(q^{3/2}+1)q^{3/2}(q-1)$.

\begin{proposition}[\cite{GSX}, Proposition 2.2]
  The Hermitian curve $H_{q_0}$ has exactly two short orbits under the action of its full automorphism group.
  One is non-tame of size $q^{3/2}+1$, consisting of the $\FF_q$-rational points. The other is tame of size $\frac 13 q^{3/2}(q-1)(q_0+1)$, consisting of all points of degree 3.
\end{proposition}

\subsection{The Suzuki Curve}

For $s \geq 1$ and $q = 2q_0^2 = 2^{2s+1}$, the Suzuki curve $S/\FF_q$ has an affine model defined by
\[
  y^q + y = x^{q_0}(x^q+x)
\]
and has genus $q_0(q-1)$.
Its automorphism group $\Sz(q)$, which has size $(q^2+1)q^2(q-1)$, acts doubly transitively on the $q^2+1$
rational points.
The Suzuki curve is maximal over $\FF_{q^4}$.

\begin{proposition}\label{suz_orbits}
  The Suzuki curve has exactly two short orbits under the action of its full automorphism group. One is non-tame of size $q^2+1$, consisting of the $\FF_q$-rational points. The other is tame of size $\frac 14 q^2(q-1)(q+2q_0 + 1)$, consisting of all points of degree 4.
\end{proposition}

\begin{proof}
The automorphism group $G= \Sz(q)$ transitive on the $q^2+1$ points of $S(\FF_q)$ with point stabilizer of order $q^2(q-1)$.
Fix a rational point $\mf P_\infty$ on $S$ lying above a point $\mf p_\infty$ in $S/G \cong \PP^1$.
Then
\[
  \#G_\infty = e(\mf P_\infty|\mf p_\infty) = q^2(q-1) .
\]
Let $G_i$ denote the (lower) ramification groups at $\mf P_\infty$.
Then (\cite{HKT}, \S12.2)
\begin{align*}
 &\#G_1 = q^2, \\
 &\#G_2 = \cdots = \#G_{2q_0+1} = q, \\
 &\#G_{2q_0+2} = 1 .
\end{align*}
Therefore, the different exponent at $\mf p_\infty$ is
\begin{align*}
 d(\mf p_\infty)
&= ( q^2(q-1) - 1) + (q^2 - 1) + 2q_0 \cdot (q - 1) \\
&= q^3+2qq_0 - 2q_0-2 .
\end{align*}
From the Hurwitz formula, it follows that
\[
 \sum_{\mf p \neq \mf p_\infty} \frac{d(\mf p)}{e(\mf p)} \deg \mf p
= \frac{q-2q_0}{q-2q_0+1}
< 1 .
\]
Now $d(\mf p) \geq e(\mf p) - 1$, with equality if and only if $\mf p$ is tamely ramified.
Thus the inequality above implies that there is exactly one place $\mf p$ of $S/G$ other than $\mf p_\infty$ which is ramified in $S \to S/G$, and that $\deg \mf p = 1$.
Moreover, $\mf p$ is tamely ramified with $e(\mf p) = q-2q_0+1$.

Let $\mf P$ be a prime of $S$ lying over $\mf p$.
Then the inertia group $I = I(\mf P|\mf p)$ is cyclic of order $q-2q_0+1$.
There is a unique conjugacy class of cyclic subgroups of this order ($I$ is a Singer subgroup, see \cite{Huppert}).
The decomposition group $D$ of $\mf P$ has size $(\deg \mf P) [G:I]$.
Since $N_G(I)$ is the unique maximal subgroup containing $I$, $D \subset N_G(I)$ and $\deg \mf P$ divides $[N_G(I):I] = 4$.
But $\mf P$ does not have degree 1 because it is not conjugate to $\mf P_\infty$, and $S$ has no points of degree 2, so $\deg \mf P = 4$ and $D = N_G(I)$.
Thus, the point $\mf P$ has orbit of size $[G:D] = \frac 14 q^2(q-1)(q+2q_0 + 1)$.
Since this equal to the number of points on $S$ of degree 4, these points form a single orbit under the action of $G$.
\end{proof}

\subsection{The Ree Curve}

For $s \geq 1$ and $q = 3q_0^2 = 3^{2s+1}$, the Ree curve $R/\FF_q$ may be defined by the affine equations
\begin{align*}
  y^q - y &= x^{q_0}(x^q-x), \\
  z^q - z &= x^{2q_0}(x^q-x) .
\end{align*}
The curve $R$ has $q^3+1$ points rational over $\FF_q$, genus $\frac 32 q_0(q-1)(q+q_0+1)$, and automorphism group $\Ree(q)$ of size $(q^3+1)q^3(q-1)$.
The Ree curve is maximal over $\FF_{q^6}$.

\begin{proposition}
  The Ree curve has exactly two short orbits under the action of its full automorphism group.
  One is non-tame of size $q^3+1$, consisting of the $\FF_q$-rational points.
  The other is tame of size $\frac 16 q^3(q-1)(q+1)(q+3q_0+1)$, consisting of all points of degree 6.
\end{proposition}

\begin{proof}

The proof of this proposition is similar to that of Proposition \ref{suz_orbits}.
The only pieces of information needed are the sizes of the ramifications groups at a rational point of $R$ in $R \to R/\Aut(R)$, and a list of the maximal subgroups of $\Aut(R)$.
These may be found in \cite{HP} and on page 648 of \cite{HKT}, respectively. \qedhere

\end{proof}

\section{The Curve $\widetilde S$}\label{sec wtS}

In this section, we introduce a cover $\widetilde S \to S$ of the Suzuki curve which is analogous to the GK-curve $\widetilde H \to H$ and show that it is maximal over $\FF_{q^4}$.
Fix $s \geq 1$, let $q = 2q_0^2 = 2^{2s+1}$, and recall the definition of the Suzuki curve $S$ from section \ref{sec DL curves}.
Let $\widetilde S$ be a smooth model of the curve with function field described by
\begin{align*}
  y^q+y &= x^{q_0}(x^q+x), \\
  t^m &= x^q+x ,
\end{align*}
where $m = q-2q_0+1$.
The curve $\widetilde S$ may be described as the normalization of the fiber product of the covers $S \to \PP_x^1$ and $C_m \to \PP_x^1$, where $C_m$ is the curve described by the second equation above.

Let $F = \FF_q(x)$.
The function field $\FF_q(\widetilde S)$ is the composite of $\FF_q(C_m) = F(t)$ and $\FF_q(S) = F(y)$.
Each place of $F$ of degree 1 is ramified in $F(t)$, with ramification index $m$, and no other places are ramified.
Also, the place $\infty$ corresponding to $1/x$ is the only place ramified in $F(y)$, with ramification index $q$.
Therefore, the only places ramified in $F(t,y)/F(y)$ are the $q^2+1$ rational places, and each is tamely ramified with ramification index $m$.
Thus, the Hurwitz formula gives
\[
  g_{\widetilde S}
  = 1 + m(g_S-1) + \frac 12 (q^2+1)(m-1)
  = \frac 12 (q^3-2q^2+q) .
\]

\begin{theorem}\label{Smax}
  The curve $\widetilde S$ is maximal over $\FF_{q^4}$.
\end{theorem}

We prove this by means of the following ``Natural Embedding Theorem'' of Korchm\'aros and Torres.
\begin{theorem}[Korchm\'aros-Torres \cite{KT}] \label{NET}
  A smooth geometrically irreducible  projective curve is maximal over $\FF_{\ell^2}$ if and only if it admits an embedding as a curve of degree $\ell+1$ in a non-degenerate Hermitian variety defined over $\FF_{\ell^2}$.
\end{theorem}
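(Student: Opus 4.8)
The plan is to route both implications through the fundamental divisor-class relation of Rück and Stichtenoth, which I would first derive from maximality. If $X/\F_{\ell^2}$ is maximal, its $L$-polynomial is $(1+\ell T)^{2g}$, so the $\ell^2$-power Frobenius $\Phi$ acting on $\Jac(X)$ has characteristic polynomial $(T+\ell)^{2g}$. Since Frobenius on an abelian variety over a finite field is semisimple, $\Phi$ acts as the scalar $[-\ell]$ on $\Jac(X)$. Fixing an $\F_{\ell^2}$-rational point $P_0$ (which exists on a maximal curve), this translates into
\[
  \Phi(P) + \ell P \sim (\ell+1) P_0 \qquad \text{for every } P \in X,
\]
because $[\Phi(P)-P_0] = \Phi[P-P_0] = -\ell[P-P_0]$ in $\operatorname{Pic}^0(X)$. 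Conversely, the same computation shows that if a relation of this shape holds for all $P$, then $\Phi=[-\ell]$ on $\Jac(X)$ and hence $X$ is maximal; so the content of the theorem is to match this relation with the geometry of a Hermitian variety.

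For the forward direction I would use the complete linear series $\mc D = |(\ell+1)P_0|$ and its associated morphism $\pi \colon X \to \P^n$, with $n+1 = \dim L((\ell+1)P_0)$. The relation above says precisely that each divisor $\ell P + \Phi(P)$, of degree $\ell+1$, is a member of $\mc D$, i.e. a hyperplane section of $\pi(X)$. Writing $x_0,\dots,x_n$ for a basis of $L((\ell+1)P_0)$ defined over $\F_{\ell^2}$, I would produce a nonzero Hermitian matrix $A=(a_{ij})$, with $a_{ij}^{\ell}=a_{ji}$, for which the Hermitian form vanishes identically on $X$:
\[
  \sum_{i,j} a_{ij}\, x_i^{\ell} x_j = 0 .
\]
Such an $A$ is forced by the relation: the hyperplane cutting out $\ell P + \Phi(P)$ is the polar of $\pi(P)$ with respect to $A$, so that $\pi(P)$ lies on its own polar for every $P$, which is exactly the condition $\pi(P)\in\mc H$ for the Hermitian variety $\mc H\colon \bar v^{T}Av=0$. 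Granting that $\pi$ is an embedding and that $A$ is non-singular, $\pi$ realizes $X$ as a degree-$(\ell+1)$ curve (the degree equals $\deg\mc D=\ell+1$ once $\pi$ is birational onto its image) lying on the non-degenerate Hermitian variety $\mc H$.

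For the converse I would start from an embedding $X\subseteq\mc H\subset\P^n$ of degree $\ell+1$ with $\mc H\colon \bar v^{T}Av=0$ non-degenerate, and read off the relation. For a point $P=[v]\in\mc H$ the polar hyperplane $\Pi_P=\{w:\bar v^{T}Aw=0\}$ passes through $P$, and the defining contact property of Hermitian varieties is that $\Pi_P$ meets $\mc H$, hence the degree-$(\ell+1)$ curve $X$, in the divisor $\ell P + \Phi(P)$. Since all polar hyperplanes are hyperplane sections, they are linearly equivalent, giving $\ell P + \Phi(P)\sim(\ell+1)H$ for the hyperplane class $H$; by the first paragraph this yields maximality.

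The main obstacle is the geometric analysis underlying the forward direction: showing that the complete series $|(\ell+1)P_0|$ is base-point free and very ample, that the Hermitian matrix $A$ extracted from the divisor relation is genuinely non-singular (so that $\mc H$ is non-degenerate rather than a cone), and above all that the polar of $\pi(P)$ meets $X$ with the exact contact pattern $\ell P + \Phi(P)$. Controlling this order of contact is where I expect the real work to lie: it requires the Stöhr--Voloch theory of Frobenius orders for the series $\mc D$, together with the fact that for maximal curves the Frobenius-order analysis forces contact of order exactly $\ell$ at $P$ and a single further intersection at $\Phi(P)$. Non-degeneracy of $A$ would then follow from the non-degeneracy of $\pi(X)$ in $\P^n$, i.e. from completeness of the series, while injectivity and unramifiedness of $\pi$ would be extracted from the same osculating analysis.
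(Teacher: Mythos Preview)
The paper does not prove this theorem at all: it is quoted from \cite{KT} and used as a black box, with only a brief description (immediately following the statement) of the concrete construction for producing the embedding. So there is no ``paper's own proof'' to compare against; your proposal is a sketch of the argument in the cited reference, not of anything appearing in this paper.

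As a sketch of the Korchm\'aros--Torres argument your outline is broadly on target: the R\"uck--Stichtenoth relation $\Phi(P)+\ell P\sim(\ell+1)P_0$, the linear series $|(\ell+1)P_0|$, extraction of a Hermitian matrix, and the St\"ohr--Voloch order analysis are exactly the ingredients used there. Two points to flag. First, the paper's one-paragraph description of the construction is phrased dually to yours: rather than embedding by the basis $f_i$ of $L((\ell+1)P_0)$ and searching for a Hermitian matrix $A$ with $\sum a_{ij}f_i^{\ell}f_j=0$, it takes the unique point $(z_0:\cdots:z_m)$ over $\F_{\ell^2}(X)$ satisfying $\sum z_i^{\ell}f_i=0$ and then projects; the two presentations are equivalent, but only the latter makes transparent why a change of coordinates and projection may be needed. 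Second, in your converse direction the claim that the polar hyperplane $\Pi_P$ cuts $X$ in exactly $\ell P+\Phi(P)$ is not a ``defining contact property of Hermitian varieties'' that one can simply invoke: from $X\subset\mc H$ one sees directly that $P$ and $\Phi(P)$ lie on $\Pi_P$, but the intersection multiplicity $\ell$ at $P$ is precisely the subtle point requiring the Frobenius-order analysis you mention at the end, so it belongs among the obstacles rather than among the facts.
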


Here a Hermitian variety means a projective hypersurface defined by the vanishing of
\[
  \sum_{0 \leq i,j\leq n} a_{ij} x_ix_j^\alpha ,
\]
where $\alpha$ is the involutive automorphism $w \mapsto w^\ell$ of $\FF_{\ell^2}$, and $A = [a_{ij}]$ is some ``Hermitian'' matrix satisfying $A^t = A^\alpha$.

Given an $\FF_{\ell^2}$-maximal curve $X$, there is a concrete construction described in \cite{KT} for an embedding as in Theorem \ref{NET}.
First take a basis $f_1,\ldots,f_m$ for the linear series $L((\ell+1)P_0)$, where $P_0$ is any rational point of $X$.
Then there is a unique point $(z_0:\cdots:z_m) \in \PP^m_{\FF_{\ell^2}(X)}$ satisfying
\[
  z_1^\ell f_1 + \cdots + z_m^\ell f_m = 0 .
\]
After a linear change of variables, and possibly taking a projection, the functions $z_i$ then give desired embedding.

Define functions $z = y^{2q_0} + x^{2q_0+1}$ and $w = xy^{2q_0}+z^{2q_0}$ on $\widetilde S$.
These satisfy the relations
\begin{equation}\label{hqh}
  z^q+z = x^{2q_0}(x^q+x),
  \qquad w^q+w = y^{2q_0}(x^q+x) .
\end{equation}
Moreover, if $\infty$ denotes the pole of the function $x$, then
\begin{align*}
-v_\infty(x) &= qm = q^2-2qq_0+q ,\\
-v_\infty(y) &= -(1+ \frac{1}{2q_0} )v_\infty(x) = q^2 - qq_0 + q_0 ,\\
-v_\infty(z) &= -(1+ \frac{1}{q_0} )v_\infty(x) = q^2-q+2q_0 ,\\
-v_\infty(w) &= -(1+\frac{1}{q_0} + \frac 1q )v_\infty(x) = q^2+1 ,\\
-v_\infty(t)  &= -\frac{q}{m} v_\infty(x) = q^2 .
\end{align*}

Since the semigroup generated by these numbers has genus $g(\widetilde S)$, the pole orders of these functions generate the Weierstrass semigroup of $\widetilde S$ at $\infty$.
In particular, the linear series $L((q^2+1)\infty)$ has a basis $\mc B$ of monomials in $1,x,y,z,w,t$.
This fact was useful for finding the equation of the particular Hermitian variety in which we wish to embed $\widetilde S$, since it allows a search for the functions $z_i$ mentioned above to be phrased as a linear algebra problem over a vector space with basis $b_i^{q^2k} b_j$ with $b_i,b_j \in \mc B$.
Such a computation performed in Magma \cite{Magma} resulted in the discovery of equation \eqref{HSuz} below.

\begin{lemma}
  Every automorphism of $S$ lifts to an automorphism of $\widetilde S$ defined over $\FF_{q^4}$.
\end{lemma}

\begin{proof}
  The group $\Aut \FF_q(S)$ is generated by the stabilizer of the point $\infty$, which consists of automorphisms $\psi_{abc}$ taking
  \begin{align*}
    x &\mapsto ax+b \\
    y &\mapsto a^{q_0+1}y + b^{q_0}x + c ,
  \end{align*}
  for $a \in \FF_q^\times$ and $b,c \in \FF_q$,
  along with an involution $\phi$ defined by $\phi(x) = z/w$ and $\phi(y) = y/w$, which swaps $\infty$ with another rational point (see \cite{HS} and \cite{Henn}).

To extend $\psi = \psi_{abc}$ to $\widetilde S$, we need
\[
  \psi(t)^m
  = \psi(x)^q - \psi(x)
  = a(x^q+x) .
\]
Fix a generator $\alpha$ of $\FF_q^\times$ and an $m$th root $\beta$ of $\alpha$ which is contained in $\FF_{q^4}$ since $m$ divides $q^4-1$.
Then we may take $\psi(t) = a^{1/m}t$, where $a^{1/m}$ is chosen consistently with the choice of $\alpha$ and $\beta$.

The automorphism $\phi$ may be lifted to an automorphism of $\widetilde S$ by $\phi(t) = t/w$.
Indeed, $\phi$ so defined satisfies
\[
  \phi(t)^m = \phi(x)^q + \phi(x) = (z/w)^q + z/w .
\]
To verify this, first multiply the desired equality by $w^{q+1}$ and use \eqref{hqh} to obtain
\begin{align*}
  t^m w^{2q_0}
  &= z^qw + zw^q \\
  &= w(z^q+z) + z(w^q+w) \\
  &= wx^{2q_0}(x^q+x) + zy^{2q_0}(x^q+x) .
\end{align*}
Thus the desired equation is equivalent to $w^{2q_0} = wx^{2q_0} + zy^{2q_0}$, which may now be verified by using the definitions of $z$ and $w$.
\end{proof}

\begin{lemma}
  The map $\pi = (1:x:t:z:w)$ defines a smooth embedding of the curve $\widetilde S$ in $\PP^4$.
\end{lemma}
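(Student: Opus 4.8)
The plan is to prove that $\pi$ is a closed embedding via the standard criterion for curves: a morphism from a smooth complete curve to projective space is a closed immersion exactly when it separates points and separates tangent vectors. Since $x,t,z,w$ all have their only pole at $\infty$, the map $\pi$ is given by a subsystem of $|(q^2+1)\infty|$, and because $-v_\infty(w)=q^2+1$ is the largest of the pole orders, the pullback of the hyperplane $\{X_0=0\}$ is exactly $(q^2+1)\infty$. Thus $\infty$ is the unique point lying over that hyperplane and $\pi(\infty)=(0:0:0:0:1)$, so $\infty$ is automatically separated from every affine point. This reduces everything to two local analyses: the affine part $\wt S_{\mathrm{aff}}=\wt S\setminus\{\infty\}$ and the single point $\infty$.

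For the affine part I would first record the key algebraic identity. In characteristic $2$ the relation $z=y^{2q_0}+x^{2q_0+1}$ gives $y^{2q_0}=z+x^{2q_0+1}$, and since $(2q_0)q_0=q$ we get $y^q=(y^{2q_0})^{q_0}=(z+x^{2q_0+1})^{q_0}$; substituting into the Suzuki equation yields $y=(z+x^{2q_0+1})^{q_0}+x^{q_0}(x^q+x)$, so $y\in\F_q[x,z]$. Next I would check that the naive model $V(f_1,f_2)$ with $f_1=y^q+y-x^{q_0}(x^q+x)$ and $f_2=t^m-(x^q+x)$ is already smooth: its $2\times 3$ Jacobian contains the minor in the $x,y$ columns $\det\left(\begin{smallmatrix}\partial_x f_1 & \partial_y f_1\\ \partial_x f_2 & \partial_y f_2\end{smallmatrix}\right)=\det\left(\begin{smallmatrix}x^{q_0}&1\\1&0\end{smallmatrix}\right)=1$, a unit everywhere, so the model is a smooth complete intersection curve and hence equals $\wt S_{\mathrm{aff}}$. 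Consequently $\F_q[\wt S_{\mathrm{aff}}]=\F_q[x,y,t]/(f_1,f_2)=\F_q[x,z,t]$, so the surjection $\F_q[X,T,Z]\twoheadrightarrow\F_q[\wt S_{\mathrm{aff}}]$ exhibits $(x,t,z)\colon\wt S_{\mathrm{aff}}\to\A^3$ as a closed immersion; adjoining the coordinate $w=xz+x^{2q_0+2}+z^{2q_0}$ (again a polynomial in $x,z$) keeps it a closed immersion into $\A^4$.

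It remains to treat $\infty$, and this is precisely where the extra coordinate $w$ is essential. Using the tabulated pole orders, in the chart $X_4\neq 0$ the function $t/w$ satisfies $v_\infty(t/w)=(q^2+1)-q^2=1$, so $t/w$ is a local uniformizer at $\infty$; since $\pi(\infty)$ is $\F_q$-rational the comorphism $\mathcal{O}_{\P^4,\pi(\infty)}\to\mathcal{O}_{\wt S,\infty}$ is then surjective and $\pi$ is an immersion at $\infty$. By contrast, the smallest positive valuation produced by $1,x,t,z$ alone is $v_\infty(z/t)=q-2q_0=2q_0(q_0-1)\geq 2$, so the projection $(1:x:t:z)$ is \emph{not} immersive at $\infty$; this is exactly the defect that $w$ repairs. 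Combining the three inputs, $\pi$ is injective and an immersion at every point of the complete curve $\wt S$, hence a closed embedding into $\P^4$.

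The main obstacle is not any one computation but correctly organizing the argument: recognizing that $y$ and $w$ are polynomials in $x,z$ and that the affine model is smooth, which together make the affine part a closed immersion for free. The genuinely delicate point is the behaviour at $\infty$, where one might naively hope $(1:x:t:z)$ already embeds; the ramification bookkeeping shows it does not, and one must verify that adjoining $w$ lowers the relevant valuation to $1$. I would therefore concentrate the care on the valuation computation at $\infty$ and on invoking the correct form of the criterion, namely that an injective immersion out of a complete curve is automatically a closed immersion.
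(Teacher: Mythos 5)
Your proof is correct, but it reaches the conclusion by a route that differs from the paper's at both of the key steps. For the affine part, the paper works directly with the image in the coordinates $(x,t,z,w)$: it writes down the three equations $z^q+z=x^{2q_0}(x^q+x)$, $w^q+w=(x^{2q_0+1}+z)(x^q+x)$, $t^m=x^q+x$ and checks that their Jacobian has rank $3$, so the image curve is smooth; you instead observe that $y=(z+x^{2q_0+1})^{q_0}+x^{q_0}(x^q+x)$ and $w$ lie in $\F_q[x,z]$, so that $x,t,z$ already generate the affine coordinate ring and the affine map is a closed immersion essentially by definition (your rank-$2$ Jacobian check on the $(x,y,t)$-model plays the role of the paper's rank-$3$ check). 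Your version makes explicit the birationality onto the image that the paper's Jacobian argument tacitly uses, which is a small gain in completeness. The more substantive divergence is at $\infty$: the paper transports nonsingularity from the visibly smooth point $P_0=(1:0:0:0:0)$ to $P_\infty=(0:0:0:0:1)$ via the lifted involution $\phi$, which acts on $\P^4$ by the coordinate permutation $(04)(13)$, whereas you verify immersivity directly from the pole-order table, noting that $v_\infty(t/w)=(q^2+1)-q^2=1$ gives a uniformizer among the coordinate ratios in the chart $X_4\neq 0$. Your computation is more self-contained (it does not depend on the previous lemma on lifting automorphisms), and your side remark that $(1:x:t:z)$ fails to be immersive at $\infty$ because $v_\infty(z/t)=q-2q_0\geq 4$ is a nice explanation of why $w$ is needed; the paper's symmetry argument is shorter but leans on the structure of $\Aut(\wt S)$. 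Both arguments are complete and correct.
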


\begin{proof}
Let $X_0,\ldots,X_4$ be homogeneous coordinates on $\PP^4$.
We first check that $\pi(\widetilde S)$ has no singular points on the affine piece $X_0 = 1$.
Here the equations
\begin{align*}
  z^q+z &= x^{2q_0}(x^q+x)\\
  w^q+w &= (x^{2q_0+1}+z)(x^q+x) \\
  t^m &= x^q+x
\end{align*}
give a matrix of derivatives of rank 3.
It remains to check that $\pi(\widetilde S)$ has no singular points on the hyperplane $X_0 = 0$.
Since the function defining $\pi$ are in $L((q^2+1)\infty)$ and
\[
v_\infty(w) = -(q^2+1) < v_\infty(f)
\]
for $f \in \{1,x,z,t\}$, the only point in $\pi(\widetilde S) \cap Z(X_0)$ is $P_\infty := \pi(\infty) = (0:0:0:0:1)$.

Let $\phi$ be the automorphism of $\widetilde S$ mentioned in the previous lemma, which acts on the image $\pi(\widetilde S) \subset \PP^4$ via the permutation $(04)(13)$ of homogeneous coordinates on $\PP^4$.
Since the point $P_0 = (1:0:0:0:0) \in \pi(\widetilde S)$ is nonsingular, so is the point $\phi(P_0) = P_\infty$.
\end{proof}

\begin{proof}[Proof of Theorem \ref{Smax}]
We claim that
\begin{equation}\label{HSuz}
  w^{q^2} + w + z^{q^2}x + x^{q^2}z = t^{q^2+1} ,
\end{equation}
so that the image of the map $\pi = (1:x:t:z:w)$ lies on the Hermitian hypersurface
\[
  X_0X_4^{q^2} + X_0^{q^2}X_4 + X_1^{q^2}X_3+X_1X_3^{q^2} = X_2^{q^2+1}
\]
in $\PP^4$.
By Theorem \ref{NET} this will complete the proof of the theorem.
Writing $f = x^q+x$ for convenience, we use \eqref{hqh} to rewrite the terms on the left hand side of the desired equation as
\begin{align*}
  w^{q^2} + w
  &= (y^{2q_0}f)^q + y^{2q_0}f \\
  &= (y^{2q_0}+x^{2qq_0+q}+x^{q+2q_0})f^q + y^{2q_0}f
\end{align*}
and
\begin{align*}
  z^{q^2} x + x^{q^2}z
  &= (z^{q^2}+z)x + (x^{q^2}+x)z \\
  &= ((x^{2q_0}f)^q+x^{2q_0}f)x + (f^q+f)(x^{2q_0+1}+y^{2q_0}) \\
  &= ( x^{2qq_0+1} + x^{2q_0+1}+y^{2q_0} )f^q + y^{2q_0}f .
\end{align*}
Adding these gives
\[
  (x^{2qq_0+q}+x^{q+2q_0} + x^{2qq_0+1} + x^{2q_0+1} )f^q
  = f^{q+2q_0+1}
  = t^{q^2+1},
\]
and so the claim is proven.
\end{proof}

\section{The Curve $\widetilde R$}\label{sec wtR}

Fix $s \geq 1$, and let $q = 3q_0^2 = 3^{2s+1}$, and recall the definition of the Ree curve $R$ from section \ref{sec DL curves}.
In this section, we construct a cover $\widetilde R \to R$ which is also maximal over $\FF_{q^6}$.
Let $\widetilde R$ be a smooth model of the curve with function field described by
\begin{align*}
  y^q-y &= x^{q_0}(x^q-x), \\
  z^q-z &= x^{2q_0}(x^q-x), \\
  t^m &= x^q-x ,
\end{align*}
where $m = q-3q_0+1$.
The curve $\widetilde R$ may be described as the normalization of the fiber product of of the covers $R \to \PP_x^1$ and $C_m \to \PP_x^1$, where $C_m$ is the curve described by the third equation above.

Let $F = \FF_q(x)$.
The function field $\FF_q(\widetilde R)$ is the composite of $\FF_q(C_m) = F(t)$ and $\FF_q(S) = F(y,z)$.
Each place of $F$ of degree 1 is ramified in $F(t)$ with ramification index $m$, and no other places are ramified.
Also, the place $\infty$ corresponding to $1/x$ is the only place ramified in $F(y,z)$, with ramification index $q^2$.
Therefore, the only places ramified in $F(t,y,z)/F(y,z)$ are the $q^3+1$ rational places, and each is tamely ramified with ramification index $m$.
Thus, the Hurwitz formula gives
\begin{align*}
  g_{\widetilde R}
  = 1 + m(g_R-1) + \frac 12 (q^3+1)(m-1) 
  = \frac 12 (q^4-2q^3+q) .
%  = \frac 12 (q^4 + 6q^3q_0 + 2q^3 - 2q^2 - 6qq_0  - 3q + 2).
\end{align*}

\begin{theorem}\label{Rmax}
  The curve $\widetilde R$ is maximal over $\FF_{q^6}$.
\end{theorem}

We proceed as with the curve $\widetilde S$ by embedding $\widetilde R$ in a Hermitian hypersurface.
In \cite{Ped}, Pedersen defines ten functions $w_1,\ldots,w_{10}$ on the Ree curve by
\begin{align*}
w_1 &= x^{3q_0+1} - y^{3q_0}    &w_6 &= v^{3q_0} - w_2^{3q_0} + x w_4^{3q_0}\\
w_2 &= xy^{3q_0} - z^{3q_0} &w_7 &= w_2+v\\
w_3 &= xz^{3q_0} - w_1^{3q_0} &w_8 &= w_5^{3q_0}+xw_7^{3q_0}
\addtocounter{equation}{1}\tag{\theequation} \label{wdef} \\
w_4 &= xw_2^{q_0} - yw_1^{q_0} &w_9 &= w_4w_2^{q_0}-yw_6^{q_0}\\
v &= xw_3^{q_0} - zw_1^{q_0} & w_{10} &= zw_6^{q_0} - w_3^{q_0}w_4\\
w_5 &= yw_3^{q_0} - zw_1^{q_0}
\end{align*}
From the appendix of \cite{Ped}, these satisfy
\begin{align*}
  w_1^q-w_1 &= x^{3q_0}(x^q-x)     & w_4^q-w_4 &= (w_2-xw_1)^{q_0}(x^q-x) \\
  w_2^q-w_2 &= y^{3q_0}(x^q-x)     & w_6^q-w_6 &= w_4^{3q_0}(x^q-x)
  \addtocounter{equation}{1}\tag{\theequation} \label{wqw} \\
  w_3^q-w_3 &= z^{3q_0}(x^q-x)     & w_8^q-w_8 &= w_7^{3q_0}(x^q-x) .
\end{align*}
Furthermore, if $\infty$ denotes the unique pole of $x$ in $\widetilde R$, then it follows from the above that
\begin{align*}
-v_\infty(x)   &= q^3-3q^2q_0+q^2 ,
    & -v_\infty(w_6) &= q^3-q+3q_0 , \\
-v_\infty(w_1) &= q^3-2q^2+3qq_0,
    & -v_\infty(w_8) &= q^3+1,      \\
-v_\infty(w_2) &= q^3-q^2+q ,
    & -v_\infty(t)   &= q^3 . \\
-v_\infty(w_3) &= q^3-3qq_0+2q , &
\end{align*}
Duursma and Eid show in \cite{DE} that the 14 functions $1,x,y,z,w_1,\ldots,w_{10}$ define a smooth embedding of the Ree curve in $\PP^{13}$ and give a system of 105 equations describing this image, a few of which are referred to in this section.

\begin{lemma}
  Every automorphism of $R$ lifts to an automorphism of $\widetilde R$ defined over $\FF_{q^6}$.
\end{lemma}

\begin{proof}
We use the concrete description of $G = \Aut \FF_q(R)$ found in \cite{Ped}.
The group $G$ is generated by the stabilizer $G_\infty$ of the point $\infty$ and an involution $\phi$ which swaps $\infty$ with another rational point.
The stabilizer $G_\infty$ consists of automorphisms $\psi_{abcd}$ taking
\begin{align*}
  x &\mapsto ax+b\\
  y &\mapsto a^{q_0+1}y + ab^{q_0}x + c\\
  z &\mapsto a^{2q_0+1}z - a^{q_0+1}b^{q_0}y + ab^{2q_0}x + d,
\end{align*}
for $a \in \FF_q^\times$ and $b,c,d \in \FF_q$.
To extend $\psi = \psi_{abcd}$ to $\widetilde R$, we need
\[
  \psi(t)^m
  = \psi(x)^q - \psi(x)
  = a(x^q-x) .
\]
Fix a generator $\alpha$ of $\FF_q^\times$ and an $m$th root $\beta$ of $\alpha$ which is contained in $\FF_{q^6}$ since $m$ divides $q^6-1$.
Then we may take $\psi(t) = a^{1/m}t$, where $a^{1/m}$ is chosen consistently with the choice of $\alpha$ and $\beta$.

The involution $\phi \in \Aut \FF_q(R)$ mentioned above sends
\[
    x \mapsto w_6/w_8, \qquad
    y \mapsto w_{10}/w_8, \qquad
    z \mapsto w_9/w_8 .
\]
We claim that $\phi$ extends to $\widetilde R$ via $\phi(t) = t/w_8$.
To verify this, we show that
\[
  \phi(t)^m
  = \phi(x)^q - \phi(x)
  = (w_6/w_8)^q - w_6/w_8 .
\]
Upon multiplying through by $w_8^{q+1}$, using \eqref{wqw}, and then dividing by $x^q-x$, this is seen to be equivalent to
\[
  w_8^{3q_0} = w_8w_4^{3q_0} - w_6w_7^{3q_0} .
\]
But this is one of the equations appearing in Lemma 4.3 of \cite{DE}.
\end{proof}

\begin{lemma}
  The map $\pi = (1:x:w_1:w_2:t:w_3:w_6:w_8)$ defines a smooth embedding of the curve $\widetilde R$ in $\PP^7$.
\end{lemma}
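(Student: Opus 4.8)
The plan is to mirror exactly the structure of the smoothness proof already given for $\wt S$, adapted to the eight coordinates $(1:x:w_1:w_2:t:w_3:w_6:w_8)$. I would split the argument into two pieces: first verify that $\pi(\wt R)$ has no singular points on the affine chart $X_0 = 1$, and then handle the single point lying on the hyperplane at infinity $X_0 = 0$. On the affine piece, the relevant defining relations are the Artin-Schreier equations from \eqref{wqw} expressing $w_1^q - w_1$, $w_2^q - w_2$, $w_3^q - w_3$, $w_6^q - w_6$ in terms of $x,y,z$ together with the Kummer relation $t^m = x^q - x$, where I would substitute the polynomial expressions for $w_1,w_2,w_3,w_6$ as functions of $x,y,z$ (via \eqref{wdef}) so that everything is phrased in the coordinates at hand. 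The goal is to exhibit a Jacobian matrix of derivatives of full rank (rank $6$, one less than the ambient dimension $7$, so that the tangent space is $1$-dimensional), exactly as the $\wt S$ lemma found a rank-$3$ matrix in $\P^4$.

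For the point at infinity, I would use the pole-order computation displayed just before the lemma. Since $-v_\infty(w_8) = q^3 + 1$ is strictly larger than $-v_\infty(f)$ for every other coordinate function $f \in \{1,x,w_1,w_2,t,w_3,w_6\}$, the function $w_8$ has the unique maximal pole order, so the only point of $\pi(\wt R)$ on $Z(X_0)$ is $P_\infty := \pi(\infty) = (0:\cdots:0:1)$. To conclude smoothness there, I would invoke the automorphism $\phi$ from the preceding lemma: $\phi$ swaps $\infty$ with a rational point $P_0$ and, under the coordinate ordering $(1:x:w_1:w_2:t:w_3:w_6:w_8)$, acts as a linear permutation of homogeneous coordinates carrying $P_0 = (1:0:\cdots:0)$ to $P_\infty$. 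Since a projective linear automorphism preserves smoothness, and $P_0$ lies on the affine chart already handled, $P_\infty$ is nonsingular as well.

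The main obstacle, and the only step requiring genuine care, is pinning down the precise linear action of $\phi$ on the eight chosen coordinates. In the $\wt S$ case the action was the clean permutation $(04)(13)$, but here $\phi$ is defined on $R$ by $x \mapsto w_6/w_8$, $y \mapsto w_{10}/w_8$, $z \mapsto w_9/w_8$, and I have chosen a subset of Pedersen's functions that does not obviously include the images of all my coordinates. I would need to check that $\phi$ permutes (up to scalar) the homogeneous coordinates $X_0,\dots,X_7$ among themselves — concretely, that $\phi(w_1), \phi(w_2), \phi(w_3)$ again lie in the span of $\{1,x,w_1,w_2,t,w_3,w_6,w_8\}$ after clearing the denominator $w_8$. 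This is plausible from the pole-order pairing ($-v_\infty(1) + (-v_\infty(w_8)) = -v_\infty(w_6) + \cdots$, matching the conjugate pole orders), but verifying it may require identities among the $w_i$ drawn from the $105$ relations of Duursma-Eid \cite{DE}, analogous to the single relation $w_8^{3q_0} = w_8 w_4^{3q_0} - w_6 w_7^{3q_0}$ used in the previous lemma.

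Alternatively, if establishing that $\phi$ acts linearly proves awkward, I would fall back on a direct local computation at $\infty$: choose a uniformizer (for instance $1/t$, since $-v_\infty(t) = q^3$ has the simplest valuation structure), expand each coordinate function as a power series, and confirm that the leading terms give an injective differential, showing the embedding is an immersion at $\infty$. Either route completes the verification that $\pi$ is a closed immersion, hence that $\wt R$ embeds smoothly in $\P^7$ as required.
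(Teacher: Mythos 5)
Your proposal follows the paper's proof essentially step for step: the same rank computation on the affine chart $X_0=1$, the same pole-order argument showing $P_\infty$ is the only point on $Z(X_0)$, and the same use of the involution $\phi$ (which the paper records as acting by the coordinate permutation $(07)(16)(25)$, with $w_1 \mapsto w_3/w_8$, $w_2 \mapsto w_2/w_8$, $w_3 \mapsto w_1/w_8$, $w_6 \mapsto x/w_8$) to transport smoothness from $P_0$ to $P_\infty$; your concern about verifying that $\phi$ permutes the chosen coordinates is exactly the point the paper settles by citing the relations in \cite{Ped} and \cite{DE}. One slip to fix: on the affine chart you list only the Artin--Schreier relations for $w_1,w_2,w_3,w_6$ plus the Kummer equation, which is five equations in the seven affine coordinates and can have rank at most $5$; you must also include the relation for $w_8^q-w_8$ from \eqref{wqw} (rewritten, like the others, with $y$ and $z$ eliminated in favor of $x,w_1,w_2$ via \eqref{wdef}, since $y,z$ are not among your coordinates) to obtain the required rank $6$.
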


\begin{proof}
Let $X_0,\ldots,X_7$ be homogeneous coordinates on $\PP^7$.
We first check that $\pi(\widetilde R)$ has no singular points on the affine piece $X_0 = 1$.
Here we have, from \eqref{wdef} and \eqref{wqw},
\begin{align*}
 w_1^q - w_1 &= x^{3q_0}(x^q-x) \\
 w_2^q - w_2 &= (x^{3q_0+1}-w_1)(x^q-x)\\
 w_3^q - w_3 &= (x^{3q_0+2}-xw_1 - w_2 )(x^q-x)  \\
 w_6^q - w_6 &= (x^{3q_0}w_2^q - w_1^qx^{3q_0+1} + w_1^{q+1} )(x^q-x)  \\
 w_8^q - w_8 &= ( w_2^{3q_0} + x^{3q_0}w_3^q - x^{6q_0+1}w_1^q + x^{3q_0}w_1^{q+1} + w_2w_1^q)(x^q-x)  .
\end{align*}
These equations, along with $t^m = x^q-x$, give a matrix of derivatives of rank 6.
It remains to show that $\pi(\widetilde R)$ has no singular points lying on the hyperplane $X_0 = 0$.
Since each of the functions defining $\pi$ are in $L((q^3+1)\infty)$, and
\[
  v_\infty(w_8) = -(q^3+1) < v_\infty(f)
\]
for $f \in \{ 1,x,w_1,w_2,w_3,w_6,t\}$, the only point of $\pi(\widetilde R) \cap Z(X_0)$ is $P_\infty := \pi(\infty) = (0:0:0:0:0:0:0:1)$.

Let $\phi$ be the involution in $\Aut \FF_q(\widetilde R)$ defined in the previous lemma, which takes $x \mapsto w_6/w_8$ and $t \mapsto t/w_8$.
Since the automorphism $\phi$ also sends
\begin{align*}
  w_1 &\mapsto w_3/w_8,
  &w_2 &\mapsto w_2/w_8,
  &w_3 &\mapsto w_1/w_8,  \\
  w_6 &\mapsto x/w_8,
  &w_8 &\mapsto w_8/w_8 ,
\end{align*}
it acts on the image $\pi(\widetilde R) \subset \PP^7$ via the permutation $(07)(16)(25)$ of homogeneous coordinates on $\PP^7$.
Since the point $P_0 = (1:0:0:0:0:0:0) \in \pi(\widetilde R)$ is nonsingular, so is the point $\phi(P_0) = P_\infty$.
\end{proof}

\begin{proof}[Proof of Theorem \ref{Rmax}]
We show that
\[
  w_8^{q^3}+w_8
  + xw_6^{q^3} + x^{q^3}w_6
  + w_1w_3^{q^3} + w_1^{q^3}w_3
  + w_2^{q^3+1}
  = t^{q^3+1} ,
\]
so that the image of the map $\pi = (1:x:w_1:w_2:t:w_3:w_6:w_8)$ in $\PP^7$ lies on the Hermitian hypersurface
\[
  X_0X_7^{q^3} + X_0^{q^3}X_7
  + X_1X_6^{q^3} + X_1^{q^3}X_6
  + X_2X_5^{q^3} + X_2^{q^3}X_5
  + X_3^{q^3+1}
  = X_4^{q^3+1} .
\]
The desired result will then follow by Theorem \ref{NET}.
Since
\[
  t^{q^3+1} = (x^q-x)^{\frac{q^3+1}{m}} = (x^q-x)^{q^2+3qq_0+2q+3q_0+1} ,
\]
our verification may be done completely inside of the function field $\FF_q(x,y,z)$ of $R$.
Writing $f = x^q-x$ for convenience, we use \eqref{wqw} to rewrite the terms on the left hand side of the desired equation as
\begin{align*}
  w_8^{q^3} + w_8
  &= (w_7^{3q_0})^{q^2}f^{q^2} + (w_7^{3q_0})^{q}f^q + w_7^{3q_0}f - w_8, \\
%%%
  xw_6^{q^3} + x^{q^3}w_6
  &= (w_6^{q^3}-w_6)x + (x^{q^3}-x)w_6 - xw_6 \\
  &= ( (w_4^{3q_0})^{q^2}x + w_6 )f^{q^2} \\
   &\qquad + ( (w_4^{3q_0})^{q}x + w_6 )f^q
   + ( w_4^{3q_0}x + w_6 )f
   - xw_6, \\
%%%
  w_1w_3^{q^3} + w_1^{q^3}w_3
  &= (w_3^{q^3}-w_3)w_1 + (w_1^{q^3}-w_1)w_3 - w_1w_3 \\
  &= ( (z^{3q_0})^{q^2}w_1 + (x^{3q_0})^{q^2}w_3 )f^{q^2} \\
  &\qquad + ( (z^{3q_0})^{q}w_1   + (x^{3q_0})^{q}w_3 )f^q
   + (  z^{3q_0}w_1        + x^{3q_0}w_3 )f
   - w_1w_3, \\
%%%
  w_2^{q^3+1}
  &= (w_2^{q^3}-w_2)w_2 + w_2^2 \\
  &= (y^{3q_0})^{q^2}w_2 f^{q^2} + (y^{3q_0})^{q}w_2 f^q + y^{3q_0}w_2 f  + w_2^2 .
\end{align*}
Collecting terms involving common powers of $f$ gives
\[
  A_{-1} + A_0f + A_1f^q + A_2 f^{q^2} ,
\]
where
\[
  A_{-1} = -w_8-xw_6-w_1w_3 + w_2^2
\]
and
\[
  A_i
  = (w_7^{q^i})^{3q_0}
  + (w_4^{q^i})^{3q_0}x
  + (z^{q_i})^{3q_0}w_1
  + (y^{q_i})^{3q_0}w_2
  + (x^{q_i})^{3q_0}w_3
  + w_6
\]
for $i = 0,1,2$.
We claim that $A_{-1} = A_0 = A_1 = 0$.
Indeed, the quadric $A_{-1}$ and each of the three terms in the expression
\begin{multline*}
  A_0
  = (x^{3q_0}w_3 - z^{3q_0}w_1 - w_7^{3q_0} + w_2^{3q_0}) \\
   - (w_4^{3q_0}x + z^{3q_0}w_1 - y^{3q_0}w_2) \\
   - (w_4^{3q_0}x + w_7^{3q_0} + w_2^{3q_0} - w_6)
\end{multline*}
are among the relations listed in Lemma 4.3 of \cite{DE}, so $A_{-1} = 0 = A_0$.
Now using \eqref{wqw}, we obtain
\begin{align*}
  A_0^q - A_1
  &= (w_4^q)^{3q_0}(x^q-x)
  + (w_6^q-w_6) \\
  &\qquad
  + (z^q)^{3q_0}(w_1^q-w_1)
  + (x^q)^{3q_0}(w_3^q-w_3)
  + (y^q)^{3q_0}(w_2^q-w_2) \\
  &= (w_4^q+w_4+z^qx+x^qz + y^{q+1})^{3q_0} (x^q-x) .
\end{align*}
Further simplification reveals that
\[
  B_1 := w_4^q+w_4+z^qx+x^qz + y^{q+1} = 0 ,
\]
and so $A_1 = 0$.

It remains to show that $A_2 = (x^q-x)^{3qq_0+2q+3q_0+1}$.
By \eqref{wqw},
\begin{align*}
  A_1^q - A_2
  &= (w_4^{q^2})^{3q_0}(x^q-x)
  + (w_6^q-w_6) \\
  &\qquad
  + (z^{q^2})^{3q_0}(w_1^q-w_1)
  + (x^{q^2})^{3q_0}(w_3^q-w_3)
  + (y^{q^2})^{3q_0}(w_2^q-w_2) \\
  &= (w_4^{q^2} + w_4 + z^{q^2}x + x^{q^2}z + y^{q^2+1} )^{3q_0}(x^q-x) .
\end{align*}
Thus it suffices to show that
\[
  B_2 := w_4^{q^2} + w_4 + z^{q^2}x + x^{q^2}z + y^{q^2+1} = - (x^q-x)^{q+2q_0+1} .
\]
To do this, we use \eqref{wqw} again, obtaining
\begin{align*}
  B_1^q - B_2
  &= (w_4^q-w_4) + z^{q^2}(x^q-x) + x^{q^2}(z^q-z) + y^{q^2}(y^q-y) \\
  &= (w_2^{q_0} - w_1^{q_0}x^{q_0} + z^{q^2} + x^{q^2}x^{2q_0} + y^{q^2}x^{q_0})(x^q-x) \\
  &= \left[ (xy^{3q_0} - z^{3q_0})^{q_0} - (x^{3q_0+1} - y^{3q_0})^{q_0}x^{q_0} \right.\\
  & \left. \qquad  + z^{q^2} + x^{q^2}x^{2q_0} + y^{q^2}x^{q_0} \right] (x^q-x) \\
  &= \left[ (z^q - z)^q + (y^q - y)^q x^{q_0} + (x^q - x)^q x^{2q_0} \right](x^q-x) \\
  &= (x^{2qq_0} + x^{qq_0+q_0} + x^{2q_0}) (x^q-x)^{q+1} \\
  &= (x^q-x)^{q+2q_0+1} . \qedhere
\end{align*}
\end{proof}

\section{Ray Class Fields}\label{sec RCF}

In this section, we let $X/\FF_q$ denote one the Deligne-Lusztig curves $H$, $S$, or $R$, and let $d = 3,4$, or 6, respectively.
Then $\#X(\FF_q) = q^{d/2}+1$, and $\widetilde X$ is maximal over $\FF_{q^d}$, and the cover $\widetilde X \to X$ is of degree $m = q - \lfloor d/2 \rfloor q_0 +1$.
Moreover, each $\FF_q$-rational point of $X$ is totally ramified in $\widetilde X$ and these are the only points ramified in $\widetilde X \to X$.
The following lemma implies then that every point of $X$ of degree $d$ splits completely in $\widetilde X$ over $\FF_{q^d}$.

\begin{lemma}\label{lemma_Ymax}
   Let $X/\FF_q$ be a curve which is maximal over $\FF_{q^d}$, and let $f\colon Y \to X$ be a tame cover of degree $m > 1$ defined over $\FF_q$.
   Suppose that $f$ is totally ramified at each $\FF_q$-rational point of $X$ and unramified elsewhere.
   Then any two of the following conditions implies the third.
\begin{enumerate}[(i)]
  \item $Y$ is maximal over $\FF_{q^d}$,
  \item $\#X(\FF_q) = q^{d/2}+1$,
  \item $\#(f^{-1}(P) \cap Y(\FF_{q^d})) = m$ for every $P \in X(\FF_{q^d}) \setminus X(\FF_q)$.
\end{enumerate}
\end{lemma}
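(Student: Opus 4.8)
The plan is to reduce the whole statement to a single linear relation among three ``defect'' quantities, one attached to each of (i), (ii), (iii), and then to observe that this relation lets any two of the defects force the third to vanish. Throughout I would write $Q = q^d$, $N = \#X(\F_q)$, and $\sqrt Q = q^{d/2}$, and I would keep in mind that $X$ is maximal over $\F_Q$ by hypothesis, so that $\#X(\F_Q) = Q + 1 + 2 g_X \sqrt Q$ is available as a standing identity.

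First I would record a clean point count for $Y(\F_Q)$. Since $f$ is defined over $\F_q$, every $\F_Q$-rational point of $Y$ maps to an $\F_Q$-rational point of $X$. Above each of the $N$ totally ramified $\F_q$-rational points there is a single place of $Y$, which has residue degree $1$ (because $ef = m$ forces $f = 1$ under total ramification) and is therefore $\F_q$-rational; so these account for exactly $N$ points. Every other $\F_Q$-rational point of $X$ lies in $X(\F_Q) \setminus X(\F_q)$ and is unramified, contributing $s_P := \#(f^{-1}(P) \cap Y(\F_Q))$ points. Hence
\[
  \#Y(\F_Q) = N + \sum_{P \in X(\F_Q) \setminus X(\F_q)} s_P = m\,\#X(\F_Q) - (m-1)N - \sigma,
\]
where $\sigma := \sum_P (m - s_P) \ge 0$ measures the failure of (iii).

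Next I would feed in the genus via Riemann--Hurwitz. Because $f$ is tame of degree $m$, totally ramified at the $N$ rational points and unramified elsewhere, one gets $2 g_Y - 2 = m(2 g_X - 2) + N(m-1)$. Substituting this together with the maximality identity for $X$ into the Hasse--Weil expression $Q + 1 + 2 g_Y \sqrt Q$ and comparing with the point count above, the terms in $g_X$ and in $\#X(\F_Q)$ cancel, and after collecting powers of $\sqrt Q$ the computation should collapse to
\[
  \delta_Y - \sigma = (m-1)\bigl(\sqrt Q + 1\bigr)\bigl(N - \sqrt Q - 1\bigr),
\]
where $\delta_Y := (Q + 1 + 2 g_Y \sqrt Q) - \#Y(\F_Q) \ge 0$ is the Hasse--Weil defect of $Y$. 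Writing $\epsilon := \#X(\F_q) - (q^{d/2} + 1)$, this reads $\delta_Y - \sigma - (m-1)(q^{d/2}+1)\,\epsilon = 0$, and the three conditions $\delta_Y = 0$, $\sigma = 0$, $\epsilon = 0$ are exactly (i), (iii), (ii) respectively.

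Finally, since $m > 1$ the coefficient $(m-1)(q^{d/2}+1)$ is nonzero, so all three variables occur with nonzero coefficient in this linear relation; hence the vanishing of any two of $\delta_Y, \sigma, \epsilon$ forces the vanishing of the third, which is precisely the claim. I expect the only delicate point to be the bookkeeping in the point count --- in particular verifying that each totally ramified $\F_q$-point contributes exactly one $\F_Q$-rational point of $Y$ and that no $\F_Q$-rational point of $Y$ is missed --- after which the rest is a mechanical substitution requiring no inequalities at all.
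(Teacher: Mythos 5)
Your proof is correct and follows essentially the same route as the paper's: Riemann--Hurwitz for $g_Y$, maximality of $X$ to express $\#X(\F_{q^d})$, and a fiber-by-fiber count of $\#Y(\F_{q^d})$ split over the ramified and unramified loci, all combined into one linear relation. Packaging this as the single identity $\delta_Y - \sigma = (m-1)(q^{d/2}+1)\bigl(N - q^{d/2}-1\bigr)$ among three nonnegative defects with nonzero coefficients is just a cleaner presentation of the paper's chain of (in)equalities, and it delivers all three implications at once.
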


\begin{proof}
  Let $N_r$ denote the number of $\FF_{q^r}$-rational points of $X$..
  From the Hurwitz genus formula,
  \[
    2g_Y - 2 = m(2g_X-2) + N_1 (m-1) .
  \]
  Also, since $X$ is maximal over $\FF_{q^d}$ we have $N_d = q^d + 1 + 2g_X q^{d/2}$.
  Then
  \begin{align*}
    \#Y(\FF_{q^d})
    &\leq q^d + 1 + 2g_Yq^{d/2} \\
    &= q^d + 1 + q^{d/2}\left[ m(2g_X-2) + N_1(m-1) + 2 \right] \\
    &= N_1 + m ( N_d - N_1 ) + q^{d/2}(m-1)\left[ N_1 - (q^{d/2}+1) \right] ,
  \end{align*}
  If $Y$ is maximal over $\FF_{q^d}$ then equality holds above, and so (ii) is satisfied if and only if (iii) is.
  On the other hand, if both (ii) and (iii) hold, then
  \begin{align*}
    \#Y(\FF_{q^d})
    = N_1 + m (N_d - N_1)
    = q^d + 1 + 2g_Yq^{d/2},
  \end{align*}
  and $Y$ is maximal over $\FF_{q^d}$.
\end{proof}

Define the divisor $\mf m$ as the sum of all $\FF_q$-rational points of $X$ and let $\Sigma$ be the set of the points of $X$ of degree $d$ over $\FF_q$.
Then there is a curve $X_\rcf \to X$ whose function field is the ray class field over $K = \FF_{q^d}(X)$ of conductor $\mf m$ in which each place in $\Sigma$ splits completely.
Since $\widetilde K = \FF_{q^d}(\widetilde X)$ is an abelian extension of $K$ satisfying these ramification and splitting conditions, $\widetilde K$ is contained in $K_\rcf = \FF_{q^d}(X_\rcf)$, and the cover $X_\rcf \to X$ factors through $\widetilde X$.

\begin{theorem}\label{Xrcf max}
  The curve $X_\rcf$ is maximal over $\FF_{q^d}$.
\end{theorem}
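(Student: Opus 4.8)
The plan is to obtain maximality from a Riemann--Hurwitz genus computation together with an $\F_{q^d}$-point count, running the bookkeeping of Lemma~\ref{lemma_Ymax} for the cover $g\colon X_\rcf \to X$ of degree $n := [K_\rcf : K]$. First I would record the properties of $g$ that the construction already supplies, all over $\F_{q^d}$: because the conductor $\mf m = \sum_{P \in X(\F_q)} P$ is reduced, $g$ is tamely ramified, is ramified only above the points of $X(\F_q)$, and is unramified elsewhere. Moreover, applying Lemma~\ref{lemma_Ymax} to $\wt X \to X$ (hypotheses (i) and (ii) hold by Theorems~\ref{Smax} and \ref{Rmax} and the known maximality of $\wt H$) shows that every degree-$d$ point of $X$ splits completely in $\wt X$; since $\wt K \subseteq K_\rcf$ and $K_\rcf$ is by definition the ray class field in which $\Sigma$ splits, every point of $\Sigma = X(\F_{q^d}) \setminus X(\F_q)$ splits completely in $X_\rcf$.

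Granting that $g$ is totally ramified at each of the $N_1 = q^{d/2}+1$ points of $X(\F_q)$, the computation is then identical to that of Lemma~\ref{lemma_Ymax} with $m$ replaced by $n$. Riemann--Hurwitz gives $2g_{X_\rcf} - 2 = n(2g_X - 2) + N_1(n-1)$, while counting rational points --- one above each totally ramified point of $X(\F_q)$ and $n$ above each of the $N_d - N_1$ completely split points of $\Sigma$ --- gives $\#X_\rcf(\F_{q^d}) = N_1 + n(N_d - N_1)$. Substituting $N_1 = q^{d/2}+1$ and $N_d = q^d + 1 + 2g_X q^{d/2}$ into both expressions, the same algebra as in Lemma~\ref{lemma_Ymax} collapses the error term and yields $\#X_\rcf(\F_{q^d}) = q^d + 1 + 2g_{X_\rcf} q^{d/2}$, which is the assertion.

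The step I expect to be the main obstacle, and the only nonformal input above, is total ramification of $g$ at the $\F_q$-rational points. This cannot be sidestepped: since $\Aut(X)$ is transitive on $X(\F_q)$ and lifts to $X_\rcf$, all these points share one tame inertia index $e$ and one residue degree $f$, and repeating the count with residue degree one and index $e$ produces $q^d + 1 + 2g_{X_\rcf}q^{d/2} - \#X_\rcf(\F_{q^d}) = N_1^2(1 - n/e)$; the Hasse--Weil bound then already forbids $e < n$, so once $f = 1$ is known the count forces $e = n$ and hence maximality. The substantive task is therefore the purely local one of showing that the decomposition and inertia groups at a point $P \in X(\F_q)$ both coincide with $G = \Gal(K_\rcf/K)$. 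I would argue this by class field theory: the inertia at $P$ is the image of the local unit group $(\mathcal O_P/\mf p_P)^\times \cong \F_{q^d}^\times$, and one must show that imposing complete splitting along the very large set $\Sigma$ --- that is, quotienting the ray class group by the Frobenius classes of the places of $\Sigma$ --- collapses both the degree-zero class group of $X$ and the remaining local factors, leaving $G$ generated by this single inertia subgroup. Verifying that the splitting relations exhaust $G$ in this way (equivalently, that $K_\rcf/\wt K$ is totally ramified above each point of $\wt X$ lying over $X(\F_q)$) is the delicate point and is where the detailed arithmetic of the ray class group enters.
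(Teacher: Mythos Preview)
Your reduction is correct: once $X_\rcf\to X$ is known to be totally ramified at every $P\in X(\F_q)$, the bookkeeping of Lemma~\ref{lemma_Ymax} with $n$ in place of $m$ gives maximality immediately. The gap is exactly the step you yourself label ``the delicate point'': you do not prove total ramification, but only restate it as an assertion about the ray class group (that the Frobenius classes of $\Sigma$ kill everything except a single inertia image) and leave that assertion unverified. Carrying this out directly is not straightforward, and note that the structural information about $\Gal(K_\rcf/K)$ available in the paper (Corollary~\ref{cyclic}) is proved only \emph{after} Theorem~\ref{Xrcf max}, so it cannot be invoked here.

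The paper fills this gap without any class field theory, by pushing precisely the Hasse--Weil/Riemann--Hurwitz count you already began. Working with $Y=X_\rcf\to\wt X$ of degree $k$, it allows each $P\in\wt X(\F_q)$ its own index $e_P\mid k$, compares the lower bound $\#Y(\F_{q^d})\ge km\,\#\Sigma+N-r$ (where $N=q^{d/2}+1$ and $r=\#\{P:e_P<k\}$) against the Hasse--Weil upper bound, and substitutes Riemann--Hurwitz together with $m\,\#\Sigma=q^{d/2}(q-1)N$ and $2g_{\wt X}-2=(q-2)N$. This yields
\[
\sum_{e_P>1}\Bigl(\tfrac{k}{e_P}-1\Bigr)\le 1+r\,q^{-d/2},
\]
and since each nonzero summand is a positive integer a short case check forces $r=0$, i.e.\ $e_P=k$ for all $P$. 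In fact your own uniform-$(e,f)$ computation already contains this idea: if instead of assuming $f=1$ you keep $f$ general and use $ef\mid n$, the same inequality gives $e\ge nq^{d/2}/N_1>n/2$, which is incompatible with $f\ge 2$; then your $f=1$ argument finishes. The class field theory detour is unnecessary.
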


\begin{proof}
Write $Y = X_\rcf$.
By Lemma \ref{lemma_Ymax} it suffices to show that $Y \to X$ is totally ramified at each point of $X(\FF_q)$.
Let $k$ be the degree of the cover $Y\to \widetilde X$ and write $N = \#\widetilde X(\FF_q) = q^{d/2}+1$.
For $P \in \widetilde X$, let $e_P$ denote the ramification index of $P$ in $Y$.
Then
\[
  \#Y(\FF_{q^d}) = km \#\Sigma + N - r,
\]
where $r$ is the number of $\FF_q$-rational points of $\widetilde X$ with $e_P < k$.
On the other hand, the Hasse-Weil bound and Riemann-Hurwitz give
\begin{align*}
  \#Y(\FF_{q^d})
  &\leq q^d + 1 + 2g_Y q^{d/2} \\
  &= (q^{d/2}+1)^2 + (2g_Y-2)q^{d/2} \\
  &= N^2 + k q^{d/2}(2g_{\widetilde X} - 2) + q^{d/2} \deg \Diff Y/\widetilde X .
\end{align*}
Now
\[
  \deg \Diff Y/\widetilde X
  = k \sum_{P \in \widetilde X(\FF_q)} \left( 1 - \frac{1}{e_P} \right)
  = N(k-1) - \sum_{e_P > 1} \left( \frac{k}{e_P} - 1 \right) .
\]
Combining all this with the facts $m \#\Sigma = q^{d/2}(q-1) N$ and $2g_{\widetilde X}- 2 = (q-2)N$ and doing some rearranging yields
\begin{equation}\label{Es}
 \sum_{e_P > 1} \left( \frac{k}{e_P} - 1 \right)
 \leq 1 + r q^{-d/2}
 \leq 2 + q^{-d/2} .
\end{equation}
Since $e_P$ divides $k$, each nonzero summand on the left hand side of \eqref{Es} is at least 1, so we conclude that $e_P < k$ for at most two $P \in X(\FF_q)$.
In particular, if $r > 0$ then  either $r=1$ and $e_P = k/3$ for a single $P$, or $r \in \{1,2\}$ and $e_P \geq k/2$ for all $P$.
But either of these cases gives a contradiction in \eqref{Es}.
\end{proof}

\begin{corollary}\label{cyclic}
  The cover $X_\rcf \to X$ is cyclic.
  In particular, the function field $K_\rcf$ is of the form $K( (x^q-x)^{1/mk} )$ for some $k$ dividing $(q^{d/2} + 1)/m$.
\end{corollary}

\begin{proof}
  The first statement follows from the fact that any tame abelian extension $L/K$ which is totally ramified at some place is cyclic.
  Indeed, if not then by replacing $K$ with a larger subfield of $L$ we may assume that $\Gal(L/K) \cong (\ZZ/r\ZZ)^2$ for some $r>1$.
  Then $L$ is a composite of two cyclic Kummer extensions $K_i = K(v_i)$ with $v_i^r = f_i \in K$.
  Since $L/K$ is totally ramified at some place $P$, it follows that $a_i = v_P(f_i)$ is invertible mod $r$ for $i=1,2$ (see \cite{Sticht}, Prop 3.7.3).
  Choose $j$ so that $ja_1 \equiv a_2 \bmod r$.
  Then $v = v_1^j/v_2 \in L$ is a root of $v^r = f_1^j/f_2$, and $K(v)/K$ is unramified at $P$ since $v_P(f_1^j/f_2) = ja_1 - a_2 \equiv 0 \bmod r$.

  Let $J_{\mf m}$ denote the ray class group over $K$ of conductor $\mf m$.
  Then $J_{\mf m}$ fits into an exact sequence
  \[
    1 \to \FF_{q^d}^\times \to \mc O_{\mf m}^\times \to J_{\mf m} \to J \to 1 ,
  \]
  where $J = \Jac(X)(\FF_{q^d})$ and
  \[
    \mc O_{\mf m}^\times
    = \prod_{P \in \mf m} (\mc O_P/\mf m_P)^\times
    \cong (\FF_{q^d}^\times )^{\# X(\FF_{q^2})}
    \cong (\ZZ/(q^d-1)\ZZ)^{q^{d/2}+1} .
  \]
  Furthermore, we have $J \cong (\ZZ/(q^{d/2}+1)\ZZ)^{2g_X}$ since $X$ is maximal over $\FF_{q^d}$.
  Thus $J_{\mf m}$ has exponent dividing $q^d-1$, and so does its quotient $\Gal(K_\rcf/K)$.
  From the discussion above, $L = K( (x^q-x)^{1/(q^d-1)} )$ is the largest abelian extension of $K$ of exponent dividing $q^d-1$ in which each $\FF_q$-rational place of $K$ is totally ramified.
  Since $K_\rcf$ is such an extension, we have $\widetilde K \subset K_\rcf \subset L$, and so $K_\rcf$ is of the form $K((x^q-x)^{1/mk})$ for some $k$ dividing $(q^d-1)/m$.

  This shows that $X_\rcf$ covers the curve $C_{mk}$ given by $u^{mk} = x^q-x$.
  Since $X_\rcf$ is maximal over $\FF_{q^d}$, so is  $C_{mk}$.
  But a theorem of Garcia and Tazafolian \cite{GT} states that a curve of the form $u^r = x^q-x$ may be maximal over $\FF_{q^d}$ only for $r$ dividing $q^{d/2} + 1$.
  Thus $k$ divides $(q^{d/2}+1)/m$, as desired.
\end{proof}

It follows from the proof of Corollary \ref{cyclic} that a sufficient condition for $X_\rcf$ to be equal to $\widetilde X$ is that none of the curves $C_r$ defined by $u^r = x^q-x$ are maximal over $\FF_{q^d}$ for $r$ a proper multiple of $m$.
This is the case when $X=H$, as we show in the following proposition.

\begin{proposition}\label{rmax}
  Let $q = q_0^2$ be a square, and let $r$ be a multiple of $m = q-q_0+1$ which divides $q^{3/2}+1 = (q_0+1)m$.
  Then the curve $C_r$ defined by $u^r = x^q - x$ is maximal over $\FF_{q^3}$ if and only if $r=m$.
\end{proposition}

\begin{proof}
  Let $r = mk$ for some $k$ dividing $q_0+1$.
  Then since $g(C_r) = \frac 12 (r-1)(q-1)$,  $C_r$ is maximal over $\FF_{q^3}$ only if
  \[
    \#C_r(\FF_{q^3}) = q^3 + 1 + (r-1)(q-1)q^{3/2} .
  \]
  Let $\Tr$ denote the field trace from $\FF_{q^3}$ to $\FF_q$.
  Since $r$ divides $q^3-1$, the field $\FF_{q^3}$ contains the $r$th roots of unity.
  Therefore, each $\alpha \in \FF_{q^3}^{\times r}$ has exactly $r$ $r$th roots in $\FF_{q^3}$, and the number of solutions of $\alpha = \beta^q-\beta$ is either $q$ or 0, depending on whether $\Tr(\alpha) = 0$ or not.
  Thus, every element of $\FF_{q^3}^{\times r} \cap \ker \Tr$ contributes $rq$ points to $C_r(\FF_{q^3})$.
  Along with the $q$ points corresponding to $u=0$ and the point at infinity, this gives
  \[
    \#C_r(\FF_{q^3}) = q+1 + rq \cdot \#( \FF_{q^3}^{\times r} \cap \ker \Tr) .
  \]
  Therefore, $C_r$ is maximal over $\FF_{q^3}$ if and only if
  \begin{equation}\label{rTr0}
    \#( \FF_{q^3}^{\times r} \cap \ker \Tr)
    = (q-1)(q_0 + 1/k) .
  \end{equation}
  Since the curve $C_m$ is maximal over $\FF_{q^3}$, we have $\#( \FF_{q^3}^{\times m} \cap \ker \Tr) = (q-1)(q_0+1)$.
  Let $\alpha$ be an element of $\FF_{q^3}^{\times m} \cap \ker \Tr$, so that $\alpha^{(q^3-1)/m} = 1$ and $\Tr(\alpha) = 0$.
  Then
  \begin{align*}
    0
    &= \alpha^{(q^3-1)/m - q^2}\Tr(\alpha) \\
    &= \alpha^{(q^3-1)/m - q^2}(\alpha + \alpha^q + \alpha^{q^2}) \\
    &= \alpha^{(q-1)q_0} + \alpha^{(q-1)(q_0+1)} + 1 ,
  \end{align*}
  so there at most $(q-1)(q_0+1)$ such $\alpha$.
  We conclude that $\FF_{q^3}^{\times m} \cap \ker \Tr$ consists of the roots of the polynomial
  \[
    f(T) = T^{(q-1)(q_0+1)} + T^{(q-1)q_0} + 1 .
  \]
  We claim that the trace zero elements of $\FF_{q^3}^{\times m}$ are evenly distributed among the cosets of the multiplicative subgroup $W = \FF_{q_0^3}^\times \subset \FF_{q^3}^{\times m}$ of index $q_0+1$.
  To see this, first note that the polynomial $f(T)$ admits the factorization
  \[
    f(T)
    = \prod_{\zeta^{q_0+1} = 1} ( T^{q-1} + \zeta T^{q_0-1} + 1 )  .
  \]
  Fix a generator $\beta$ of $\FF_{q^3}^{\times m}$, so that $\zeta = \beta^{q^{3/2}-1}$ is a primitive $(q_0+1)$th root of unity.
  Then we claim that each root of $f(T)$ lying in the coset $\beta^{-i} W$ is a root of $f_i(T) =  T^{q-1} + \zeta^i T^{q_0-1} + 1$.
  For if $\alpha \in \beta^{-i}W$, then $(\alpha \beta^i)^{q_0^3-1} = 1$ and
  \begin{align*}
    \alpha^{(q-1)q_0} f_i(\alpha)
    &= \alpha^{(q-1)q_0}(\alpha^{q-1} + \zeta^i\alpha^{q_0-1} + 1) \\
    &= \alpha^{(q-1)(q_0+1)} + \beta^{(q_0^3-1)i} \alpha^{q_0^3-1} + \alpha^{(q-1)q_0}
    = f(\alpha) .
  \end{align*}
  It follows that $\#(\FF_{q^3}^{r} \cap \ker \Tr) = (q-1)(q_0+1)/k$.
  Comparing with \eqref{rTr0}, we see that $C_r$ is maximal only if $k=1$.
\end{proof}

\begin{corollary}
  The Giulietti-Korchm\'aros curve $\widetilde H$ is equal to $H_\rcf$.
\end{corollary}

The analogue of Proposition \ref{rmax} does not hold in the situation corresponding to $S_\rcf$ and $R_\rcf$.
Indeed, in the Suzuki case $q^{d/2}+1 = q^2+1 = (q+q_0+1)m$, and the curve $u^{q^2+1} = x^q + x$ is covered by the Hermitian curve $u^{q^2+1} = x^{q^2}+x$, hence is maximal over $\FF_{q^4}$.
In the Ree case, there are also proper multiplies $r$ of $m$ such that the curve $u^r = x^q-x$ is maximal over $\FF_{q^6}$.

In any case, Corollary \ref{cyclic} allows one to verify computationally that $X_\rcf = \widetilde X$ for small values of $q$ by checking that $K( (x^q-x)^{1/m\ell})$ is not maximal over $\FF_{q^d}$ for any prime $\ell$ dividing $(q^{d/2}+1)/m$.
Our computations in Magma have shown that $S_\rcf = \widetilde S$ for $q = 2^{2s+1}$ with $1 \leq s \leq 6$, and that $R_\rcf = \widetilde R$ for $q = 27$.
We leave as an open problem the question of whether $X_\rcf = \widetilde X$ in general.
The following bound on the degree of the cover $X_\rcf \to \widetilde X$ follows from Theorem \ref{Xrcf max}.

\begin{corollary}
  The degree $k$ of the cover $X_\rcf \to \widetilde X$ satisfies
  \[
    k \leq \frac{q^{d/2}-3}{q-2} .
  \]
\end{corollary}

\begin{proof}
Since the curve $X_\rcf$ is maximal over $\FF_{q^d}$ its genus is at most $q^{d/2}(q^{d/2}-1)/2$ \cite{Ihara}.
The desired bound follows immediately by combining this with the Hurwitz formula applied to the cover $X_\rcf \to \widetilde X$, and the fact that $2g_{\widetilde X} - 2 = (q-2)(q^{d/2}+1)$.
\end{proof}

\begin{remark*}
This bound is slightly better than the bound $k \leq (q^{d/2}+1)/m$ from Corollary \ref{cyclic}.
When $X$ is the Suzuki or Ree curve, it gives
\begin{align*}
  k &\leq q+2,\\
  k &\leq q^2+2q+4,
\end{align*}
respectively.
Current results on the genus spectrum of maximal curves may be used to reduce these bounds by a factor of 3.
\end{remark*}

\bibliography{rcf_bib}{}
\bibliographystyle{plain}

\end{document}